\theoremstyle{plain}
\newtheorem{thm}{Theorem}
\theoremstyle{definition}
\font\smallrm=cmr8
\newcommand{\N}{\mathbb N}
\newcommand{\R}{\mathbb R}
\newcommand{\Q}{\mathbb Q}
\title{Applications of the Hales-Jewett\\ Theorem near zero}
\date{}
\author{Pintu Debnath
\footnote{Department of Mathematics, 
Basirhat College, Basirhat-743412, North
24th Parganas, West Bengal, India
{\tt pintumath1989@gmail.com}}
\and
Sayan Goswami
\footnote{Department of Mathematics, 
University of Kalyani, Kalyani-741235,
Nadia, West Bengal, India {\tt sayan92m@gmail.com}\hfill\break
{\it Keywords\/}: van der Waerden's Theorem,
Ramsey Theory near zero, polynomial Hales-Jewett Theorem
}}
\begin{document}

\maketitle

\begin{abstract}
 The famous van der Waerden's theorem states that if $\mathbb{N}$
is finitely colored then one color class will contain arithmetic progressions
of arbitrary length. The polynomial van der Waerden's theorem says
that if $p_{1}(x),p_2(x),\ldots,p_{k}(x)$ are polynomials with integer coefficients
and zero constant term and $\mathbb{N}$ is finitely colored,
then there exist $a,d\in\mathbb{N}$ such that $\big\{a+p_{t}(d):t\in\{1,2,\ldots,k\} \big\} $
is monochromatic. There are dynamical, algebraic, and combinatorial
proofs of this theorem. In this article we will prove a ``near zero" version
of the polynomial van der Waerden's theorem. That is, we will show
that for any $\epsilon>0$, if $(0,\epsilon)\cap\mathbb{Q}$
is finitely colored then there exists $a,d\in\mathbb{Q}\setminus\{0\}$
such that $\big \{a+p_{t}(d):t\in\{0,1,2,\ldots,k\}\big\}$
is monochromatic. 
\end{abstract}

\section{Introduction}

N. Hindman and I. Leader in \cite{HL} first introduced the
study of the semigroup of ultrafilters near 0. This study deals
with partition results using the algebra of the  Stone-\v{C}ech compactification
of subsemigroups of $(\R,+)$ viewed as discrete spaces.
They developed a central sets theorem near zero and, as a corollary, 
proved a version of van der Waerden's 
theorem near zero. 

Although there are dynamical \cite{BL},
algebraic \cite{H} and combinatorial \cite{W} proofs of the
polynomial van der Waerden's theorem over $\mathbb{Z}$, 
there has not been a near zero version.

We point out first that a near zero version of 
van der Waerden's theorem is an immediate consequence
of the finitistic version of van der Waerden's theorem 
itself \cite{vdW}, which says that whenever $r,k\in\mathbb{N}$
there exists $n\in\N$ such that whenever
$\{1,2,\ldots,n\}$ is $r$-colored, there exists $a,d\in
\{1,2,\ldots,n\}$ such that 
$\{a,a+d,a+2d,\ldots,a+kd\}$ is monochromatic. (See \cite[page 9]{GRS}.)

\begin{thm}\label{vdwnearzero} Let $0<\varepsilon<1$ be given.
For any $r,k\in\mathbb{N}$ there exists a finite
set $A\subseteq(0,\varepsilon)\cap\mathbb{Q}$ such that whenever $A$ is partitioned
into $r$ cells, there will exist $a,d\in(0,\varepsilon)\cap\mathbb{Q}$
such that $\{a,a+d,a+2d,\ldots,a+kd\}$ is monochromatic. 
\end{thm}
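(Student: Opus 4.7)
The plan is to reduce the statement to the classical finitistic van der Waerden theorem by a simple rescaling argument, exactly as the authors hint. Given $r, k \in \mathbb{N}$, I would first invoke the finitistic van der Waerden theorem to obtain an integer $n = n(r,k) \in \mathbb{N}$ such that every $r$-coloring of $\{1, 2, \ldots, n\}$ contains a monochromatic arithmetic progression $\{a', a'+d', \ldots, a'+kd'\}$ with $a', d' \in \{1, \ldots, n\}$.

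Next, I would choose any integer $N > n/\varepsilon$ and set
\[
A \;=\; \left\{\tfrac{1}{N},\tfrac{2}{N},\ldots,\tfrac{n}{N}\right\}\;\subseteq\;(0,\varepsilon)\cap\mathbb{Q}.
\]
The inclusion holds because $n/N < \varepsilon$. Observe that the map $i \mapsto i/N$ is a bijection between $\{1,\ldots,n\}$ and $A$ which preserves arithmetic progressions: it sends the AP $\{a', a'+d', \ldots, a'+kd'\}$ in $\{1,\ldots,n\}$ to the AP $\{a, a+d, \ldots, a+kd\}$ in $A$ with $a = a'/N$ and $d = d'/N$.

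Given any $r$-coloring $\chi\colon A \to \{1,\ldots,r\}$, I would pull it back via the bijection above to obtain an $r$-coloring $\tilde{\chi}\colon \{1,\ldots,n\} \to \{1,\ldots,r\}$ defined by $\tilde{\chi}(i) = \chi(i/N)$. The choice of $n$ guarantees a $\tilde{\chi}$-monochromatic AP $\{a', a'+d', \ldots, a'+kd'\} \subseteq \{1,\ldots,n\}$, which rescales to a $\chi$-monochromatic AP $\{a, a+d, \ldots, a+kd\} \subseteq A$. Since $1 \le a', d' \le n$, both $a = a'/N$ and $d = d'/N$ lie in $(0, n/N] \subseteq (0,\varepsilon) \cap \mathbb{Q}$, as required.

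There is no serious obstacle here: the only thing to verify carefully is that $N$ has been chosen large enough so that both the starting term $a$ and the common difference $d$ land inside the interval $(0,\varepsilon)$, and the condition $N > n/\varepsilon$ handles this uniformly. Everything else is a transparent transfer of monochromaticity across the dilation $x \mapsto x/N$.
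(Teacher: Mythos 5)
Your proof is correct and is essentially identical to the paper's: both rescale $\{1,\ldots,n\}$ by a denominator larger than $n/\varepsilon$ and transfer the coloring back and forth across the dilation. (Your write-up is in fact slightly more careful about checking that $a$ and $d$ land in $(0,\varepsilon)$.)
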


\begin{proof} Let $r,k\in\mathbb{N}$ be given. 
Pick $n$ as guaranteed by the finitistic version of
van der Waerden's theorem for $r$ and $k$, pick
$M\in\N$ such that $M>\frac{n}{\varepsilon}$, and let
$A=\big\{{t\over M}:t\in\{1,2,\ldots,n\}\big\}$.
If $A=\bigcup_{i=1}^rC_i$ and for 
$i\in\{1,2,\ldots,r\}$, $D_i=\{M\cdot t:t\in C_i\}$,
then $\{1,2,\ldots,n\}=\bigcup_{i=1}^r D_i$. If $i\in\{1,2,\ldots,r\}$
and $\{a,a+d,\ldots,a+kd\}\subseteq D_i$, then
$\{\frac{a}{M},\frac{a}{M}+\frac{d}{M},\ldots,\frac{a}{M}+\frac{kd}{M}\}\subseteq D_i$
\end{proof}

Theorem \ref{vdwnearzero} is not as strong as
\cite[Corollary 5.1]{HL}, but it has the advantage
that it depends only on the finitistic version of
van der Waerden's theorem, which has an elementary
proof, while \cite[Corollary 5.1]{HL} uses the algebra
of the Stone-\v Cech compactification, so needs the
Axiom of Choice.

The reason the proof of Theorem \ref{vdwnearzero} is
so trivial is that arithmetic progressions are 
linear objects.  In the next section we will
establish two nonlinear results near zero. In both of
these, the proofs are elementary. They do not utilize the
algebraic structure of the Stone-\v Cech compactification
of a discrete semigroup.

The first of
these results is the following which establishes the existence of geo-arithmetic
progression near $0$.

\begin{thm} \label{geoarith} Let $0<\epsilon<$1 and $k,r\in\mathbb{N}$ be given.
For any $r$-coloring of $(0,\epsilon)\cap\mathbb{Q}$ there exist $a$ and 
$d$ in $(0,\epsilon)\cap\mathbb{Q}$ such that
$\big\{b\cdot(a+i\cdot d)^{j}:i,j\in\{0,1,\ldots,k\}\big\}$ is monochromatic.
\end{thm}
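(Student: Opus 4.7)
The plan is to invoke a multidimensional version of the Hales--Jewett theorem on a pullback coloring built from a product map. Fix $\epsilon\in(0,1)$ and $r,k\in\mathbb{N}$, set $c_0=\lceil1/\epsilon\rceil$, and take the alphabet $\Sigma=\{0,1,\ldots,K\}$ with $K=\max(k,c_0)$, so that both $\{0,\ldots,k\}\subset\Sigma$ and $c_0\in\Sigma$. Using an iterated form of Hales--Jewett, choose $N$ so that every $r$-coloring of $\Sigma^N$ admits a monochromatic combinatorial $k$-dimensional subspace whose moving blocks $M_1,\ldots,M_k$ have prescribed sizes $|M_j|=j$.

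Pick a positive rational $q$ with $(K+1)q<\epsilon^{1/N}$ (possible by density of $\mathbb{Q}$) and set $a_0=d_0=q$. Define $\phi\colon\Sigma^N\to(0,\epsilon)\cap\mathbb{Q}$ by
\[
\phi(w)=\prod_{s=1}^{N}(a_0+w_s d_0);
\]
the bound on $q$ forces $\phi(w)<\epsilon$. Applying the multidimensional Hales--Jewett theorem to the $r$-coloring $\chi\circ\phi$ yields a monochromatic $k$-subspace of the required form with a fixed assignment $c$ on the remaining coordinates; put $B=\prod_{s\notin\bigcup M_j}(a_0+c(s)d_0)$ and parametrise the subspace by $(x_1,\ldots,x_k)\in\Sigma^k$.

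For each pair $(i,j)\in\{0,\ldots,k\}^2$, evaluate $\phi$ at the subspace point with $x_j=i$ and $x_{j'}=c_0$ for $j'\ne j$ (taking all $x_{j'}=c_0$ when $j=0$). A short computation gives
\[
\phi=B(a_0+c_0d_0)^{k(k+1)/2-j}(a_0+id_0)^{j}=b(a+id)^{j},
\]
where $b=B(a_0+c_0d_0)^{k(k+1)/2}$ and $a=d=a_0/(a_0+c_0d_0)=1/(1+c_0)$. The inequality $c_0\ge 1/\epsilon$ gives $a=d=1/(1+c_0)\in(0,\epsilon)\cap\mathbb{Q}$, and monochromaticity of the subspace transfers to $\{b(a+id)^j:0\le i,j\le k\}$.

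The principal obstacle is producing a monochromatic combinatorial subspace with the \emph{prescribed} block sizes $|M_j|=j$: standard Gallai--Hales--Jewett returns a monochromatic $k$-subspace but leaves the block sizes $n_1,\ldots,n_k\ge1$ up to the proof, and these could in principle all fail to include the value $1$ (compare the parity coloring of $\{0,1\}^N$, where no monochromatic line has odd-sized moving block). Without $1$ among the $n_j$, the subset sums $\sum_{j\in T}n_j$ cannot yield the exponent $1$, and the construction breaks. Making the prescribed-block-size strengthening precise by a careful iteration of Hales--Jewett, reserving at the $j$-th step a fresh block of $j$ coordinates before applying Hales--Jewett to the induced coloring on the remainder, is the main technical step.
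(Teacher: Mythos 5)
There is a genuine gap, and you have correctly located it yourself: your argument needs a ``multidimensional Hales--Jewett theorem with prescribed moving-block sizes $|M_j|=j$,'' and no such theorem exists. Your own parity example already refutes it in the first nontrivial case: for $k=1$ you would need a monochromatic combinatorial line in $\{0,1\}^N$ with $|M_1|=1$, and the mod-$2$ coloring by number of $1$'s admits none. The proposed repair --- reserving a fresh block of $j$ coordinates at step $j$ and applying Hales--Jewett to the induced coloring on the remaining coordinates --- does not close the gap: Hales--Jewett applied to the remainder produces a wildcard set \emph{inside the remainder}, and there is no mechanism forcing the reserved block to become a moving block of a monochromatic subspace (the coloring is free to be sensitive to the diagonal value on the reserved coordinates, exactly as in the parity example). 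Nor can you escape via subset sums: with uncontrolled block sizes $n_1,\dots,n_k\ge 1$ the achievable exponents of $(a+id)$ in your computation are precisely the subset sums of $\{n_1,\dots,n_k\}$, and these need not contain $1$, so the exponent set $\{0,1,\dots,k\}$ is out of reach. Since your construction puts the letter into the \emph{base} $(a_0+w_sd_0)$ and extracts the exponent from the \emph{block size}, the entire weight of the theorem rests on controlling block sizes, which is impossible.

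The way around this --- and the route the paper takes --- is to reverse the roles: make the letter the exponent and the position the base. Define $f(\alpha)=\frac1P\prod_{t\in[N]}(t/M)^{\alpha(t)}$ on $\{0,1,\dots,k\}^N$, so that substituting the letter $q$ on a wildcard set $\gamma$ contributes $\prod_{t\in\gamma}(t/M)^q=\bigl(\prod_{t\in\gamma}t/M\bigr)^q$. The exponent $q$ now ranges over $\{0,1,\dots,k\}$ for free, and the arithmetic progression in the base is obtained by requiring the wildcard set itself to carry arithmetic structure. That is exactly what Theorem~\ref{thmBM} (Bond--McCutcheon) supplies: the wildcard sets $\beta_i$ may be drawn from any partition regular family, in particular from the family of length-$(k+1)$ arithmetic progressions (partition regular by van der Waerden). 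Taking $\beta_1=\{a_1+jb_1\}$ and substituting $q$ at the positions $\{a_1+jb_1,a_2,\dots,a_l\}$ yields $f=B\cdot(A+jD)^q$ with $A,D$ built from $a_1,b_1,a_2,\dots,a_l$. So the missing ingredient in your argument is not a technical refinement of Hales--Jewett over block sizes (which is false) but a qualitatively stronger variant in which the combinatorial structure lives on the wildcard \emph{positions}; the rest of your framework (pullback coloring, scaling to land in $(0,\epsilon)\cap\Q$) is sound and parallels the paper.
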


Theorem \ref{geoarith} is a very special case of \cite[Theorem 2.9]{DP}
which was derived using the algebraic structure of $\beta S$.
We prove our theorem using Theorem \ref{thmBM}, which is
\cite[Theorem 1.5]{BM} and has an elementary proof.

Our major result is the following, which we believe is new. It
will be proved using Theorem \ref{Walters}, which is the main
theorem of \cite{W} and has an elementary proof.

\begin{thm}\label{polynearzero} Let $n\in \N$ and let $\big\{P_{i}:i\in\{1,2,\ldots,n\}\big\}$ be a collection
of polynomials on $\mathbb{Q}$ with $0$ constant term. Then for
every $0<\varepsilon<1$ and $k\in\mathbb{N}$, if we partition
$(0,\varepsilon)\cap\mathbb{Q}$ into $k$ cells, then one of them
contains a configuration of the form $\{a,a+P_{1}(\alpha),a+P_{2}(\alpha),\ldots,a+P_{n}(\alpha)\}$
where $a,\alpha\in\mathbb{Q}$. 
\end{thm}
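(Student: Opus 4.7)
The plan mirrors that of Theorem \ref{vdwnearzero}: apply Walters' polynomial van der Waerden theorem (Theorem \ref{Walters}) on $\N$ and rescale the resulting monochromatic configuration into $(0,\varepsilon)\cap\Q$. The subtlety absent from the linear case is that polynomials of different degrees do not scale uniformly under $x\mapsto x/K$, so the naive rescaling of a configuration $\{a,a+P_{i}(d):i\}$ is not in general of the form $\{a',a'+P_{i}(\alpha):i\}$ for any single $\alpha\in\Q$.

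To work around this, I would introduce auxiliary integer-coefficient polynomials. Writing $P_{i}(x)=\sum_{j\ge 1}c_{i,j}x^{j}$, let $D=\max_{i}\deg P_{i}$, let $N\in\N$ be a common denominator of the $c_{i,j}$, and for a parameter $M\in\N$ to be chosen, set
\[
Q_{i}(y)\;:=\;NM^{D}\,P_{i}(y/M)\;=\;\sum_{j\ge 1}N c_{i,j}\,M^{D-j}\,y^{j}\in\mathbb{Z}[y],
\]
which has zero constant term. The key identity $Q_{i}(d)/(NM^{D})=P_{i}(d/M)$ means that a monochromatic configuration $\{a,a+Q_{i}(d):i\}$ in $\N$, once divided by $NM^{D}$, becomes the desired $\{a',a'+P_{i}(\alpha):i\}$ with $a'=a/(NM^{D})$ and $\alpha=d/M$. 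I would then apply the finitistic form of Walters' theorem (obtained via a routine compactness argument) to $Q_{1},\ldots,Q_{n}$ with $k$ colors, producing a threshold $L$ such that every $k$-coloring of $\{1,\ldots,L\}$ contains a monochromatic $\{a,a+Q_{i}(d):i\}\subseteq\{1,\ldots,L\}$. Pulling back the given coloring of $(0,\varepsilon)\cap\Q$ to $\{1,\ldots,L\}$ via $t\mapsto t/(NM^{D})$ is then legitimate whenever $L\le NM^{D}\varepsilon$.

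The main obstacle is the circular constraint $L=L(M)$: the Walters threshold depends on $M$ through the coefficients of the $Q_{i}$. Since those coefficients grow only as $O(M^{D-1})$ while the available room $NM^{D}\varepsilon$ grows as $M^{D}$, a sufficiently large $M$ should suffice---but verifying this rigorously requires analysing how the bound in Walters' combinatorial proof scales with coefficient size. An alternative route that sidesteps this analysis entirely is to apply the multi-variable Bergelson--Leibman polynomial van der Waerden theorem to the two-variable polynomials $\hat{Q}_{i}(y,m):=N m^{D}P_{i}(y/m)\in\mathbb{Z}[y,m]$, obtaining a threshold $L$ that does not depend on $M$, and then using a shift $m\mapsto m_{0}+m$ with $m_{0}\ge (L/(N\varepsilon))^{1/D}$ to guarantee that the pulled-back configuration lies in $(0,\varepsilon)\cap\Q$.
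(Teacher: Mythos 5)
There is a genuine gap, and it is precisely the difficulty that the paper's actual proof is engineered to avoid: a circular dependence between the rescaling denominator and the Ramsey threshold. In your first route, the Walters threshold $L$ for the integer polynomials $Q_i(y)=\sum_j Nc_{i,j}M^{D-j}y^j$ is a function $L(M)$, and you need $L(M)\le NM^{D}\varepsilon$. Your heuristic ``coefficients grow like $M^{D-1}$ while the room grows like $M^{D}$'' does not close this: nothing in Walters' proof (or any known proof) gives a bound on $L$ that is polynomial, or even explicitly controlled, in the size of the coefficients, and you concede that this analysis is missing. So the first route is an incomplete argument resting on an unproven quantitative strengthening of the polynomial Hales--Jewett/van der Waerden theorem. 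The second route does not ``sidestep this entirely.'' First, to transfer the given coloring of $(0,\varepsilon)\cap\Q$ to $\{1,\ldots,L\}$ you must fix the map $t\mapsto t/(Nm^{D})$ \emph{before} applying the Bergelson--Leibman theorem, but $m$ is part of that theorem's output, so the pullback coloring cannot be defined in advance. Second, the proposed fix $m\mapsto m_{0}+m$ replaces the polynomials $\hat Q_i(y,m)$ by $\hat Q_i(y,m_{0}+m)$, whose finitistic threshold is a new quantity $L'(m_{0})$; the condition you then need, $m_{0}\ge (L'(m_{0})/(N\varepsilon))^{1/D}$, is exactly the circularity of the first route in disguise. (And even granting the shift, the divisor $N(m_{0}+m)^{D}$ still depends on the output $m$ unless $m$ is forced to equal $0$, which collapses back to the single-variable case with $m=m_{0}$ fixed.)

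For contrast, the paper does not pass through an integer polynomial van der Waerden theorem at all. It applies the polynomial Hales--Jewett theorem (Theorem \ref{Walters}) directly with the \emph{rational} alphabet $A=\{a^i_j/b^j\}$, exploiting Walters' observation that the threshold $N$ depends only on the cardinality $|A|\le nd$ and not on which rationals the alphabet contains. This breaks the circularity: $N$ is fixed first, then the denominator $b$ is chosen so large that $({m/b})\cdot\sum_{j=1}^{d}N^{j}<\varepsilon/4$, and the map $\sigma(u)=r+\sum_{j}\sum_{\vec i}u_{j,\vec i}$ lands every point of $Q$ in $(0,\varepsilon)$ by construction. The output parameter is $\alpha=|\gamma|/b$, whose numerator is bounded by the already-fixed $N$ and whose denominator $b$ was fixed in advance, so no post hoc rescaling is ever needed. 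If you want to salvage your strategy, you would have to either (i) prove the quantitative bound $L(M)=o(M^{D})$ for Walters' theorem, or (ii) abandon rescaling and encode the rationals into the Ramsey statement from the start, which is essentially what the paper's choice of alphabet accomplishes.
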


\section{Progressions near zero}

Let $\mathcal{F}$ denote the set of all finite nonempty subsets of
$\mathbb{N}$. A subfamily $\mathcal{P}$ of $\mathcal{F}$ is
said to be {\it partition regular\/} if for any finite partition of $\mathbb{N}$
there is a monochromatic member of $\mathcal{P}$.
Given $\alpha$ and $\beta$ in ${\mathcal F}$, we write $\alpha<\beta$ when
$\max \alpha<\min\beta$.  If $k,N\in\N$,  $w=u_1u_2\cdots u_N \in\{0,1,\ldots,k\}^N$,
$\emptyset\neq\alpha\subseteq \{0,1,\ldots,k\}$, $u_i=0$ for each $i\in\alpha$,
and $t\in\{0,1,\ldots,k\}$, then $w^\alpha(t)$ is the 
word obtained from $w$ by replacing $u_i$ by $t$ for each $i\in\alpha$.

Theorems \ref{thmBM} and \ref{Walters} are extensions of the original 
Hales-Jewett theorem \cite{HJ}.

\begin{thm}\label{thmBM} Let $k,r\in\mathbb{N}$ and suppose $\mathcal{P}$
is a partition regular family of finite subsets of $\mathbb{N}$.
Then there exists $N=N(k,r,\mathcal{P})\in\mathbb{N}$ such that if
$\{0,1,\ldots,k\}^{N}$ is $r$-colored, then there exist $l\in\mathbb{N}$,
$\beta_{i}\in\mathcal{P}$, $1\leq i\leq l$,
with $\beta_{1}<\beta_{2}<\ldots<\beta_{l}<\{N+1\}$, and $w=u_{1}u_{2}\ldots u_{N}\in\{0,1,\ldots,k\}^{N}$
having the property that $u_{i}=0$ for all $i\in\bigcup_{j=1}^{l}\beta_{j}$,
such that $\{ w^{\{j_{1},j_{2},\ldots,j_{l}\}}(t):j_{i}\in\beta_{i}\,,\,1\leq i\leq l\,,\,
0\leq t\leq k\}$ is monochromatic.
\end{thm}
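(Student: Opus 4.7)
The plan is to combine the classical Hales-Jewett theorem \cite{HJ} with a finitization of the partition regularity of $\mathcal{P}$. Since the index $l$ is existentially quantified in Theorem \ref{thmBM}, it suffices to handle the case $l=1$; the general $l\geq 1$ case follows by iteration if desired.

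First I would pass from the partition regularity of $\mathcal{P}$ to its finite form by a standard compactness (König's lemma) argument: for every $s\in\mathbb{N}$ there exists $M(s,\mathcal{P})\in\mathbb{N}$ such that every $s$-coloring of $\{1,2,\ldots,M(s,\mathcal{P})\}$ admits a monochromatic member of $\mathcal{P}$. Second, for the base case $l=1$, given $\chi\colon\{0,\ldots,k\}^N\to\{1,\ldots,r\}$ with $N$ sufficiently large, I would construct $\beta_1\in\mathcal{P}$ and a base word $w$ (zero on $\beta_1$) such that $\{w^{\{j\}}(t):j\in\beta_1,\ 0\le t\le k\}$ is monochromatic. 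The construction proceeds by iterating Hales-Jewett to produce a monochromatic ``combinatorial affine cube'': positions $a_1<\cdots<a_d$ and a base word $w$ (zero on $\{a_1,\ldots,a_d\}$) for which every single-position variation $w^{\{a_i\}}(t)$ carries a common color $c$. In parallel, I would introduce an auxiliary $r^{k+1}$-coloring of the candidate positions by their ``color type'' $(\chi(e_j(0)),\ldots,\chi(e_j(k)))$ and invoke the finite partition regularity of $\mathcal{P}$ to force the selected positions $\{a_1,\ldots,a_d\}$ to contain (and in fact to equal) a member of $\mathcal{P}$, which becomes the desired $\beta_1$.

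The main obstacle is the interlocking of these two steps: the Hales-Jewett theorem synchronizes the color across the ``letter'' axis $t\in\{0,\ldots,k\}$, whereas the partition regularity of $\mathcal{P}$ synchronizes the ``position'' axis, and both conditions must be obtained simultaneously rather than sequentially. Resolving this requires nesting the two constructions — beginning with a partition-regularity step that pre-commits to a candidate block structure on positions, then applying a Hales-Jewett argument that respects this commitment — with the dimensions chosen so that the resulting $\beta_1$ genuinely belongs to $\mathcal{P}$ and the resulting line family is genuinely monochromatic. For the stronger general-$l$ statement one iterates on consecutive segments of $\{1,\ldots,N\}$ and aligns colors by a final pigeonhole, but this is not needed for Theorem \ref{thmBM} as stated.
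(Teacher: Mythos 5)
There is a genuine gap, and it occurs in the very first line of your plan. The reduction to $l=1$ is not legitimate: while exhibiting $l=1$ data would indeed formally satisfy the existential statement, the $l=1$ case of the theorem is simply \emph{false}, so it cannot be what you prove. Take $k=1$, $r=2$, let $\mathcal{P}$ be the (partition regular) family of singletons, and color $\{0,1\}^{N}$ by the parity of the number of $1$'s. For any position $b$ and any word $w$ with $u_{b}=0$, the set $\{w^{\{b\}}(0),w^{\{b\}}(1)\}$ consists of $w$ and the word obtained from $w$ by changing $u_{b}$ to $1$; these have opposite parities, so this pair is never monochromatic, for any $N$. The same example defeats your intermediate ``monochromatic combinatorial affine cube'' (positions $a_{1}<\cdots<a_{d}$ with all $w^{\{a_{i}\}}(t)$ of one color), since that set always contains $w=w^{\{a_{i}\}}(0)$ together with words at Hamming distance $1$ from $w$. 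The number $l$ of blocks is therefore not a convenience parameter that may be set to $1$ and recovered later ``by iteration''; it is forced on you by the coloring (in the parity example one needs $l$ even), and any correct proof must produce the blocks $\beta_{1}<\cdots<\beta_{l}$, and the value of $l$ itself, as the output of the construction.

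Beyond this, the rest of the argument is a statement of intent rather than a proof. You correctly isolate the central difficulty --- the Hales--Jewett theorem synchronizes colors along the letter axis $t$ while partition regularity of $\mathcal{P}$ synchronizes along the position axis --- but ``nesting the two constructions'' is precisely the step that must be carried out, and your auxiliary $r^{k+1}$-coloring by color type only makes $\chi(e_{j}(t))$ constant in $j$ for each fixed $t$, not constant in $t$ as well. Note that the paper itself offers no proof of this statement: it quotes it as \cite[Theorem 1.5]{BM}. In that source (and in Beiglb\"ock's original argument) the blocks $\beta_{1},\ldots,\beta_{l}$ emerge from a color-focusing iteration in the spirit of the combinatorial proofs of the Hales--Jewett theorem, which is exactly the mechanism your sketch omits; your first step (finitizing the partition regularity of $\mathcal{P}$ by compactness) is sound and is indeed an ingredient of that argument, but it is the only part of the proposal that survives.
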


\begin{proof} \cite[Theorem 1.5]{BM}. \end{proof}

For $q\in\N$, we let $[q]=\{1,2,\ldots,q\}$.

\begin{proof}[Proof of Theorem {\rm \ref{geoarith}}.]
  The proof is similar to a proof in \cite{B} 

Let ${\mathcal P}$ be the set of
length $k+1$ arithmetic progressions in $\N$. By van der Waerden's
Theorem, ${\mathcal P}$ is partition regular. Let $N=N(k,r,{\mathcal P})$ be
as guaranteed by Theorem \ref{thmBM}. Thus if $\{0,1,\ldots ,k\}^N$ is $r$-colored, then
there exist $l\in\N$ and $\beta_1,\beta_2,\ldots,\beta_l\in{\mathcal P}$
such that $\beta_1<\beta_2<\ldots<\beta_l<\{N+1\}$ and there
exists $w=u_1u_2\cdots u_N\in\{0,1,\ldots ,k\}^N$ such that
$u_i=0$ for each $i\in\bigcup_{j=1}^l\beta_j$ and
$$\{w^{\{j_1,j_2,\ldots,j_l\}}(x):\hbox{each }j_i\in\beta_i\,,\,1\leq i\leq l\,,\,\hbox{ and }
0\leq x\leq k\}$$ is monochromatic.

Pick $P,M\in\N$ such that ${1\over P}<\epsilon$ and ${N\over M}<\epsilon$
and define $f:\{0,1,\ldots ,k\}^N\to(0,\epsilon)\cap\Q$ by 
$f(\alpha)={1\over P}\cdot\prod_{t\in[N]}\left({t\over M}\right)^{\alpha(t)}$.
Let $\varphi:(0,\epsilon)\cap\Q\to\{1,2,\ldots,r\}$ and let $\psi=\varphi\circ f$.  Pick
$l,\beta_1,\ldots,\beta_l$ and $w=u_1u_2\cdots u_N$ such that
$\psi$ is constant on 
$$\{w^{\{j_1,j_2,\ldots,j_l\}}(x):\hbox{each }j_i\in\beta_i\,,\,1\leq i\leq l\,,\,\hbox{ and }
0\leq x\leq k\}\,.$$
For each $i\in\{1,2,\ldots,l\}$, pick $a_i$ and $b_i$ in $\N$ such that
$\beta_i=\big\{a_i+j\cdot b_i:j\in\{0,1,\ldots ,k\}\big\}$.
For $j,q\in\{0,1,\ldots ,k\}$, let $\alpha_{j,q}=
w^{\{a_1+jb_1,a_2,\ldots,a_l\}}(q)$. Then for 
$t\in\{1,2,\ldots,N\}$, 
$$\alpha_{j,q}(t)=\left\{\begin{array}{rl}u_t&\hbox{if }t\notin \{a_1+jb_1,a_2,\ldots,a_l\}\\
q&\hbox{if }t\in\{a_1+jb_1,a_2,\ldots,a_l\}\,.\end{array}\right.$$
Then $\varphi$ is constant on $\big\{f(\alpha_{j,q}):j,q\in\{0,1,\ldots ,k\}\big\}$.

Let $C=[N]\setminus\bigcup_{i=1}^l\beta_i$. Using the fact that $u_t=0$ if $t\in\bigcup_{i=1}^l\beta_i$, we have that 
$$\textstyle f(\alpha_{j,q})={1\over P}\cdot\prod_{t\in C}\left({t\over M}\right)^{u_t}
\cdot\left({a_1+jb_1\over M}\right)^q\cdot\prod_{i=2}^l\left({a_i\over M}\right)^q\,.$$
Let $B={1\over P}\cdot\prod_{t\in C}\left({t\over M}\right)^{u_t}$, let
$A=\left({a_1\over M}\right)\cdot \prod_{i=2}^l\left({a_i\over M}\right)$ and let
$D=\left({b_1\over M}\right)\cdot \prod_{i=2}^l\left({a_i\over M}\right)$.
Then for $j,q\in\{0,1,\ldots ,k\}$, $f(\alpha_{j,q})=B\cdot(A+j\cdot D)^q$.\end{proof}

Theorem \ref{Walters} uses some special notation, which we introduce now.

For $q,N\in\N$, $Q=[q]^{N}$, $\emptyset\neq \gamma\subseteq[N]$ and $1\leq x\leq q$,
$a\oplus x\gamma$ is defined to be the vector $b$ in $Q$ obtained
by setting $b_{i}=x$ if $i\in\gamma$ and $b_{i}=a_{i}$ otherwise.

In the statement of Theorem \ref{Walters}, we have $a\in Q$ so that $a=\langle \vec a_1,\vec a_2,
\ldots,\vec a_d\rangle$ where for $j\in\{1,2,\ldots d\}$, 
$\vec a_j\in [q]^{N^j}$ and we have $\gamma\subseteq [N]=\{1,2,\ldots,N\}$.
Given $j\in\{1,2,\ldots,d\}$, let $\vec a_j=\langle a_{j,\vec i}\rangle_{\vec i\in N^j}$.
Then $a\oplus x_1\gamma\oplus x_2(\gamma\times\gamma)\oplus\ldots\oplus x_d\gamma^d=b$
where $b=\langle \vec b_1,\vec b_2,
\ldots,\vec b_d\rangle$ and for $j\in\{1,2,\ldots,d\}$, 
$\vec b_j=\langle b_{j,\vec i}\rangle_{\vec i\in N^i}$
where $$b_{j,\vec i}=\left\{\begin{array}{rl}x_j&\hbox{if }
\vec i\in \gamma^i\\
a_{j,\vec i}&\hbox{otherwise.}\end{array}\right.$$

\begin{thm}
 \label{Walters} For any $q,k,d$ there exists $N$ such
that whenever $Q=Q(N)=[q]^{N}\times[q]^{N\times N}\times\ldots\times[q]^{N^{d}}$
is $k-colored$ there exists $a\in Q$ and $\gamma\subset[N]$ such
that the set of points $\{a\oplus x_{1}\gamma\oplus x_{2}(\gamma\times\gamma)\oplus\ldots\oplus x_{d}\gamma^{d}:1\leq x_{i}\leq q\}$
is monochromatic. 
\end{thm}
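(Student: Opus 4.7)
My plan is to prove Theorem~\ref{Walters} by induction on the polynomial degree $d$, with the ordinary Hales--Jewett theorem serving as the base case. For $d=1$, we have $Q(N)=[q]^{N}$, and the configuration $\{a\oplus x_{1}\gamma:1\leq x_{1}\leq q\}$ is exactly a combinatorial line with active coordinate set $\gamma\subseteq[N]$, so the statement reduces to Hales--Jewett applied to alphabet $[q]$ with $k$ colors.

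For the inductive step, assume the theorem at level $d-1$ for arbitrary alphabet size and arbitrary number of colors. Given $q$, $k$, $d$, the goal is to choose $N$ so that any $k$-coloring of $Q(N)=[q]^{N}\times[q]^{N^{2}}\times\cdots\times[q]^{N^{d}}$ admits a monochromatic polynomial line. The approach is a stepping-up argument of Graham--Rothschild flavor: view $[q]^{N^{d}}$ as $\bigl([q]^{N^{d-1}}\bigr)^{N}$, so that $Q(N)$ becomes a product fibered over the linear factor $[q]^{N}$. For each choice of the linear component $\vec a_{1}\in[q]^{N}$ the inductive hypothesis, applied with an appropriately enlarged color palette that encodes degree-$(d-1)$ configurations, produces a candidate subset of $[N]$ and a monochromatic degree-$(d-1)$ polynomial line in the higher-degree factors. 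One then applies ordinary Hales--Jewett to the linear factor to find a single $\gamma\subseteq[N]$ that simultaneously serves as the active coordinate set across all degrees, so that $\gamma^{2},\ldots,\gamma^{d}$ become the active sets for the higher-degree factors.

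Two structural observations make this strategy coherent. First, once $\gamma\subseteq[N]$ is fixed, the diagonal $\gamma^{j}\subseteq[N]^{j}$ is a well-defined set of coordinates in $[q]^{N^{j}}$, so a single $\gamma$ parameterizes all $d$ directions of the polynomial line. Second, along this diagonal the parameter $x_{j}$ affects only the coordinates indexed by $\gamma^{j}$ and is independent of the other parameters $x_{i}$, which is what permits the iterated Hales--Jewett reduction to proceed one degree at a time.

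The main obstacle will be the bookkeeping needed to force all the active coordinate sets chosen at different degrees to collapse to the \emph{same} $\gamma$. A naive inductive pass would produce a separate active set at each stage, and reconciling these into one shared $\gamma\subseteq[N]$ requires choosing the parameters in the right order with generous room at each step, much as in the stepping-up proof of Hales--Jewett itself. Making this synchronization work, rather than any individual application of an earlier result, is the substantive content of the argument in \cite{W}.
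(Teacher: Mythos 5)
First, note that the paper does not actually prove Theorem \ref{Walters}: it is the Polynomial Hales--Jewett Theorem, quoted from Walters, and the paper's ``proof'' consists of the citation \cite{W}. So your attempt is really an attempt to reprove Walters' main theorem, and it must be judged as such.

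As a proof it has a genuine gap, and you have located it yourself: the entire difficulty of the theorem is forcing the wildcard sets arising at the different degrees to be the powers $\gamma,\gamma^{2},\ldots,\gamma^{d}$ of a \emph{single} $\gamma\subseteq[N]$, and your sketch ends by conceding that making this synchronization work ``is the substantive content of the argument in \cite{W}.'' That is precisely the step that cannot be deferred. Moreover, the mechanism you propose for the inductive step is unlikely to deliver it. Viewing $[q]^{N^{d}}$ as $\bigl([q]^{N^{d-1}}\bigr)^{N}$ and applying ordinary Hales--Jewett to the outer linear index with the large alphabet $[q]^{N^{d-1}}$ produces a combinatorial line whose active blocks are all set equal to one fixed element of $[q]^{N^{d-1}}$; it does not produce the required structure in which exactly the coordinates indexed by $\gamma^{d}$ carry the single symbol $x_{d}$, and the wildcard set coming from the outer application has no reason to coincide with whatever active sets the degree-$(d-1)$ hypothesis produced inside the blocks. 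Walters' actual argument does not step up in this fibered way: it is a color-focusing argument in which one builds a chain of disjoint sets $\gamma_{1}<\gamma_{2}<\cdots$ and exploits the binomial-type decomposition of $(\gamma_{i}\cup\cdots\cup\gamma_{j})^{m}$ into cross terms of strictly lower degree in each factor, with the induction running over a well-ordered family of systems of set-monomials (PET induction) rather than simply over $d$. Your base case $d=1$ is correct, but without the focusing and decomposition machinery the inductive step is an intention, not a proof.
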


\begin{proof} \cite[Polynomial Hales-Jewett Theorem]{W}. \end{proof}

We now present the proof of our main theorem.

\begin{proof}[Proof of Theorem {\rm \ref{polynearzero}}]
  This proof is based on the derivation of the Polynomial
van der Waerden Theorem from the Polynomial Hales-Jewett Theorem in
\cite{W}

For $i\in\{1,2,\ldots,n\}$ let $d_i=\deg(P_i)$ and let $d=\max\big\{d_i:i\in\{1,2,\ldots,n\}\big\}$.
Let $\langle a^i_j\rangle_{j=1}^{d_i}$ be the coefficients of $P_i$ and for
$d_i<j\leq d$ (if any) let $a^i_j=0$ so that $P_i(x)=\sum_{j=1}^d a^i_jx^j$.
Let $q=n\cdot d$ and let $N$ be as guaranteed by Theorem 6 for $k$, $q$, and $d$.
Let $m=\max\big\{|a^i_j|:i\in\{1,2,\ldots,n\}\hbox{ and }j\in\{1,2,\ldots,d\}\big\}$ and pick $b\in\N$
such that $(m\cdot\sum_{j=1}^d N^j)/b<{\epsilon\over 4}$.

Let $A=\big\{{a^i_j\over b^j}:i\in\{1,2,\ldots,n\}\hbox{ and }j\in\{1,2,\ldots,d\}\big\}$.
As noted by Walters in [W], it is the cardinality of $[q]$ in Theorem 6 that
matters, and $|A|\leq q$ so if $Q=A^N\times A^{N^2}\times\ldots\times A^{N^d}$ and
$Q$ is $k$-colored, then
there exist $u\in Q$ and $\gamma\subseteq A$ such that
$\{u\oplus x_1\gamma\oplus x_2(\gamma\times\gamma)\oplus\ldots\oplus x_d\gamma^d:\hbox{each }x_j\in A\}$
is monochromatic.

Pick $r\in({\epsilon\over 4},{\epsilon\over 2})\cap\Q$ and define
$\sigma:Q\to(0,\epsilon)\cap\Q$ as follows. Let $u\in Q$, where
$u=\langle \vec u_1,\vec u_2,\ldots,\vec u_d\rangle$ and for
$j\in\{1,2,\ldots,d\}$, $\vec u_j=\langle u_{j,\vec i}\rangle_{\vec i\in N^j}$.
Then $$\textstyle\sigma(u)=r+\sum_{j=1}^d\sum_{\vec i\in N^j}u_{j,\vec i}\,.$$
If $x\in A$, then $|x|\leq {m\over b}$ so
$$\textstyle|\sum_{j=1}^d\sum_{\vec i\in N^j}u_{j,\vec i}|
\leq \sum_{j=1}^d\sum_{\vec i\in N^j}|u_{j,\vec i}|
\leq\sum_{j=1}^d\sum_{\vec i\in N^j}{m\over b}={m\over b}\cdot\sum_{j=1}^dN^j<{\epsilon\over 4}$$
so $\sigma(u)\in (0,\epsilon)$.

Let $\varphi:(0,\epsilon)\cap\Q\to\{1,2,\ldots,k\}$.
Then $\varphi\circ\sigma$ is a $k$-coloring of $Q$. Pick 
$u\in Q$ and $\gamma\subseteq A$ such that $\varphi\circ\sigma$ is constant on 
$\{u\oplus x_1\gamma\oplus x_2(\gamma\times\gamma)\oplus\ldots\oplus x_d\gamma^d:\hbox{each }x_j\in A\}$.
Let $s=\sum_{j=1}^d\sum_{\vec i\in N^j\setminus\gamma^j}u_{j,\vec i}$ and let $c=|\gamma|$.
If $x_j\in A$ for each $j\in\{1,2,\ldots,d\}$,
$\sigma(u\oplus x_1\gamma\oplus x_2(\gamma\times\gamma)\oplus\ldots\oplus x_d\gamma^d)=
r+s+\sum_{j=1}^d x_jc^j$. In particular, if $i\in\{1,2,\ldots,n\}$, $j\in\{1,2,\ldots,d\}$ and
$x_j={a^i_j\over b^j}$, then\hfill\break 
$\sigma(u\oplus x_1\gamma\oplus x_2(\gamma\times\gamma)\oplus\ldots\oplus x_d\gamma^d)=
r+s+\sum_{j=1}^d a^i_j\cdot\left({c\over b}\right)^j=r+s+P_i\left({c\over b}\right)$.\end{proof}

\vspace{2 cm}
\noindent\textbf{Acknowledgment.} The second author of the paper acknowledges
the grant UGC-NET SRF fellowship with id no. 421333 of CSIR-UGC NET.
 We acknowledge the anonymous referee for several helpful
comments on the paper.

\bibliographystyle{plain}

\end{document}